\documentclass[twoside,11pt]{article}

\usepackage{graphicx, subfigure}
\usepackage[top=1in,bottom=1in,left=1in,right=1in]{geometry}
\usepackage[sort&compress]{natbib} \setlength{\bibsep}{0.0pt}
\usepackage{amsfonts, amsmath, amssymb, amsthm, constants, bbm}
\usepackage{MnSymbol}
\usepackage{algorithm, algorithmic}
\usepackage{multirow, booktabs}




\usepackage{color}
\definecolor{darkred}{RGB}{100,0,0}
\definecolor{darkgreen}{RGB}{0,100,0}
\definecolor{darkblue}{RGB}{0,0,150}

\usepackage{hyperref}
\hypersetup{colorlinks=true, linkcolor=darkred, citecolor=darkgreen, urlcolor=darkblue}
\usepackage{url}

\newtheorem{thm}{Theorem}
\newtheorem{prp}{Proposition}

\theoremstyle{remark}

\newtheorem{exa}{Example}

\def\beq{\begin{equation}} 
\def\eeq{\end{equation}}
\def\beqn{\begin{eqnarray*}}
\def\eeqn{\end{eqnarray*}}
\def\Bitem{\begin{itemize}\setlength{\itemsep}{.2in}}
\def\bitem{\begin{itemize}\setlength{\itemsep}{.05in}}
\def\eitem{\end{itemize}}
\def\Benum{\begin{enumerate}\setlength{\itemsep}{.2in}}
\def\benum{\begin{enumerate}\setlength{\itemsep}{.05in}}
\def\eenum{\end{enumerate}}
\def\bmult{\begin{multline*}}
\def\emult{\end{multline*}}
\def\bcenter{\begin{center}}
\def\ecenter{\end{center}}
\def\bframe{\begin{frame}}
\def\eframe{\end{frame}}
\def\balign{\begin{align}}
\def\ealign{\end{align}}

\newcommand{\thmref}[1]{Theorem~\ref{thm:#1}}
\newcommand{\prpref}[1]{Proposition~\ref{prp:#1}}

\newcommand{\secref}[1]{Section~\ref{sec:#1}}










\def\bbI{\mathbb{I}}

\def\bbR{\mathbb{R}}


\renewcommand{\P}{\operatorname{\mathbb{P}}}



\def\eps{\varepsilon}

\def\1{\mathbbm{1}}


\definecolor{purple}{rgb}{0.4,.1,.9}

\newcommand\blfootnote[1]{%
  \begingroup
  \renewcommand\thefootnote{}\footnote{#1}%
  \addtocounter{footnote}{-1}%
  \endgroup
}

\begin{document}
\thispagestyle{empty}

\title{Concentration of Measure for Radial Distributions and Consequences for Statistical Modeling}
\author{Ery Arias-Castro \and Xiao Pu}
\date{}
\maketitle

\blfootnote{Both authors are with the Department of Mathematics, University of California, San Diego, USA.  Contact \href{http://math.ucsd.edu/~eariasca}{Ery Arias-Castro} or \href{http://www.math.ucsd.edu/~xipu/}{Xiao (Victor) Pu}.  We are grateful to Chris Sherlock and Daniel Elton for bringing their paper to our attention, and to Gabor Lugosi for helpful discussions.  This work was partially supported by a grant from the US Office of Naval Research (N00014-13-1-0257).}  

\vspace{-0.3in}

\begin{abstract}
Motivated by problems in high-dimensional statistics such as mixture modeling for classification and clustering, we consider the behavior of radial densities as the dimension increases.  We establish a form of concentration of measure, and even a convergence in distribution, under additional assumptions.  This extends the well-known behavior of the normal distribution (its concentration around the sphere of radius square-root of the dimension) to other radial densities.  We draw some possible consequences for statistical modeling in high-dimensions, including a possible universality property of Gaussian mixtures.
\end{abstract}

\section{Introduction} \label{sec:intro}

Nonparametric density estimation can be quickly difficult in high-dimensions because of the curse of dimensionality.  Additional assumptions are often needed.  The most popular one might well be the Naive Bayes approach, popular in classification \citep{lewis1998naive}, which presumes that the variables are independent, or equivalently, that the density is the product of its marginals.  

Another possibility is to assume that the density is elliptical, a classical assumption in multivariate analysis \citep{MR1990662}, meaning that $f$ is of the form $f(x) = |A| g(\|A x\|)$, where $A$ is a positive definite matrix.
Any centered and non-degenerate normal distribution is elliptical, with base distribution the standard normal distribution and $A = \Sigma^{-1/2}$, where $\Sigma$ is the covariance matrix.  Note that the same distribution is also the product of its marginals when $\Sigma$ is diagonal --- the assumption underlying Linear Discriminant Analysis, an important parametric special case of Naive Bayes.
High-dimensional density estimation is of course crucial in classification and clustering.  

Our motivation comes from such problems, and in particular, mixture modeling in high-dimensions using elliptical distributions.  In the process of working on this problem we uncovered a difficulty which we elaborate upon in the present paper: that of estimating the base function.  
Indeed, it is quite tempting to extend the Gaussian mixture models to models of the form
\beq
\sum_{k=1}^K \pi_k |A_k| g_k(\|A_k x\|),
\eeq
where we assume the mixture has $K$ components, with the $k$th component having weight $\pi_k$ and density $|A_k| g_k(\|A_k x\|)$.  For example, \cite{bickel1998efficient} and more recently \cite{bhattacharyyaadaptive} consider models of this kind.  Instead of smoothness assumptions, we are more interested here in shape assumptions, for example that $g_k$ is decreasing and/or log-concave on $\bbR_+$.  
\cite{chang2007clustering} consider such mixture models but under the Naive Bayes assumption instead of assuming the densities are elliptical.
An EM approach to fitting such a model involves being able to estimate $g_k$ based on a sample from $g_k(\|x\|)$.  And this is what we found challenging in our investigation.  

Focusing on this task, suppose we have an IID sample from $f$, where $f$ rotationally invariant (aka radial), meaning that $f(x) = g(\|x\|)$ for some function $g$, and consider the problem of estimating $g$.  In fact, we can work with the magnitudes (the norms of the observations), which are sufficient.  We explain the difficulty of estimating $g$ by the fact that the magnitudes are highly concentrated as the dimension becomes large.  

The simplest case of this concentration of measure phenomenon arises when we assume that that $g \propto \psi$, where $\psi$ is fixed, as is the case in the Gaussian setting.  Specifically, we assume we are in dimension $d+1$ and we work with $\psi$ satisfying the following assumptions (and some additional assumptions specified later on)
\beq\label{psi}
\text{$\psi : \bbR_+ \to \bbR_+$ is such that $1/c_d := \int_0^\infty u^d \psi(u) {\rm d}u < \infty$ for all $d \ge 1$.}
\eeq
We let $X_d$ denote a random variable with density $c_d \psi(\|x\|)$ on $\bbR^{d+1}$ and let $U_d$ denote its magnitude, $U_d = \|X_d\|$, which has density $c_d u^d \psi(u)$ on $\bbR_+$.  

In this context we show a form of concentration of measure, and convergence in distribution, as the dimension $d$ increases.  Concentration is a well-known phenomenon in high-dimensions, in particular for product distributions (Naive Bayes), with far-reaching consequences \citep{ledoux2005concentration,boucheron2013concentration}. 
For radial distributions, it is not as well-known, except for when the density is Gaussian or uniform on a ball.  (The latter is often used to explain some forms of curse of dimensionality.)  
This case was recently studied in detail by \cite{sherlock2012class}, who cite older work by \cite{diaconis1984asymptotics,diaconis1987dozen} in rather special cases.  We comment on \citep{sherlock2012class} in more detail in \secref{discussion}, after we present our results.

In the remaining of the paper we study what happens when $d \to \infty$.  In \secref{compact} we study the case where $\psi$ has compact support, which is the simplest situation.  In \secref{non-compact} we consider the case where $\psi$ is {\em not} compactly supported.  In \secref{discussion} we discuss our results and some possible implications for statistical modeling.

\section{The case of compact support}
\label{sec:compact}

In this whole section we assume that $\psi$ has compact support.  Define the supremum of the support as follows 
\beq\label{support}
u_* = \sup\Big\{u : \int_{u - \eps}^u \psi(u) {\rm d} u > 0 \text{ for all } \eps > 0\Big\}.
\eeq
Note that $u_* < \infty$ by assumption and that the support of $\psi$ is included in $[0,u_*]$.
If $\psi$ is continuous (which the reader can assume without much loss of generality), then the following is an equivalent definition $u_* = \sup\{u: \psi(u) > 0\}$.
The emblematic example is that of the uniform distribution on the unit ball, in which case $\psi(u) = \bbI\{u \le 1\}$ and $u_* = 1$.  This distribution is well-known to concentrate near the boundary of its support (the unit sphere).  Our results below extend this to other distributions with compact support.

\subsection{Convergence in probability}
We start by establishing a convergence in probability.

\begin{thm} \label{thm:prob-compact}
In the setting considered here, $U_d \to u_*$ in probability as $d \to \infty$.
\end{thm}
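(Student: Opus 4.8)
The plan is to control the two tails of $U_d$ separately. Since the support of $\psi$ is contained in $[0,u_*]$, the density $c_d u^d \psi(u)$ of $U_d$ is supported in $[0,u_*]$, so $U_d \le u_*$ almost surely and the upper tail $\mathbb{P}(U_d > u_* + \eps)$ is identically zero. Hence the whole content of the statement is the lower tail: I would fix $\eps \in (0,u_*)$ (note $u_* > 0$, as $\psi$ is not a.e.\ zero) and show $\mathbb{P}(U_d < u_* - \eps) \to 0$.

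Writing this probability as the ratio
\[
\mathbb{P}(U_d < u_* - \eps) = \frac{\int_0^{u_* - \eps} u^d \psi(u)\,{\rm d}u}{\int_0^{u_*} u^d \psi(u)\,{\rm d}u},
\]
I would bound numerator and denominator so that the factor $u^d$ produces geometric decay. For the numerator, on $[0, u_* - \eps]$ one has $u^d \le (u_* - \eps)^{d-1} u$, hence $\int_0^{u_* - \eps} u^d \psi(u)\,{\rm d}u \le (u_* - \eps)^{d-1} \int_0^\infty u \psi(u)\,{\rm d}u = (u_* - \eps)^{d-1}/c_1$, which is finite by assumption \eqref{psi} applied with $d = 1$. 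For the denominator, the definition of $u_*$ in \eqref{support} guarantees that for any $\delta \in (0,\eps)$ the mass $\beta_\delta := \int_{u_* - \delta}^{u_*} \psi(u)\,{\rm d}u$ is strictly positive; restricting the integral to $[u_* - \delta, u_*]$ and using $u^d \ge (u_* - \delta)^d$ there gives $\int_0^{u_*} u^d \psi(u)\,{\rm d}u \ge (u_* - \delta)^d \beta_\delta$.

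Combining the two bounds yields
\[
\mathbb{P}(U_d < u_* - \eps) \le \frac{1}{c_1 \beta_\delta (u_* - \delta)}\left(\frac{u_* - \eps}{u_* - \delta}\right)^{d-1},
\]
and since $\delta < \eps$ the ratio in parentheses is strictly less than $1$, so the right-hand side tends to $0$ as $d \to \infty$; this proves $U_d \to u_*$ in probability.

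As for the main obstacle: there is essentially no computational difficulty here, and the only point requiring care is extracting the positive constant $\beta_\delta$ from the definition of $u_*$ together with the requirement $\delta < \eps$, which is precisely what makes the denominator's base $(u_* - \delta)$ strictly dominate the numerator's base $(u_* - \eps)$; any choice $\delta \ge \eps$ would destroy the decay. The finiteness of $\int_0^\infty u\psi(u)\,{\rm d}u = 1/c_1$ is handed to us by \eqref{psi}. I also note that the argument actually delivers an exponential rate of concentration, which presumably gets refined later in the paper when the limiting behaviour of $U_d$ is described more precisely.
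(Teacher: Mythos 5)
Your proof is correct and follows essentially the same route as the paper's: compare the mass of $u^d\psi(u)$ on $[0,u_*-\eps]$, which decays like $(u_*-\eps)^d$, with the mass on $[u_*-\delta,u_*]$, which is at least $(u_*-\delta)^d\beta_\delta$ with $\beta_\delta>0$ by the definition \eqref{support} of $u_*$ (the paper takes $\delta=\eps/2$ and cancels $c_d$ by forming the ratio of the two tail probabilities, which is the same cancellation you perform by dividing by the normalizing integral). One small bonus of your version: by writing $u^d\le(u_*-\eps)^{d-1}u$ you bound the numerator via the first moment $\int_0^\infty u\,\psi(u)\,{\rm d}u=1/c_1$, which is finite by \eqref{psi}, whereas the paper's bound uses $\int_0^1\psi(u)\,{\rm d}u$, whose finiteness is not literally guaranteed by \eqref{psi} if $\psi$ blows up faster than $1/u$ at the origin.
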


\begin{proof}
Assume $u_* = 1$ without loss of generality.  
Then
\begin{align}
\P(U_d < 1-\eps)
= c_d \int_0^{1-\eps} u^d \psi(u) {\rm d}u
\le c_d (1-\eps)^d \int_0^1 \psi(u) {\rm d}u,
\end{align}
while
\beq
\P(U_d \ge 1-\eps) 
\ge \P(U_d \ge 1-\eps/2)
= c_d \int_{1-\eps/2}^1 u^d \psi(u) {\rm d}u
\ge c_d (1-\eps/2)^d \int_{1-\eps/2}^1 \psi(u) {\rm d}u.
\eeq
Note that the last integral is strictly positive for all $\eps > 0$ by definition of $u_*$ in \eqref{support} (recall that we assumed that $u_* = 1$).
Hence
\beq
\frac{\P(U_d < 1-\eps)}{\P(U_d \ge 1-\eps)} \le \frac{(1-\eps)^d \int_0^1 \psi(u) {\rm d}u}{(1-\eps/2)^d \int_{1-\eps/2}^1 \psi(u) {\rm d}u} \to 0, \quad d \to \infty,
\eeq
when $\eps \in (0,1)$ is fixed.  Since $\P(U_d < 1-\eps) + \P(U_d \ge 1-\eps) = 1$, we proved that $\P(U_d < 1-\eps) \to 0$ for all $\eps > 0$.  This, coupled with the fact that $\P(U_d \le 1) = 1$, proves that $U_d \to 1$ in probability as $d \to \infty$.
\end{proof}

\subsection{Convergence in distribution}
Beyond a convergence in probability, we can establish a convergence in distribution.  The limiting distribution happens to depend on the behavior of $\psi$ in the neighborhood of $u_*$.  We only cover the case where $\psi$ behaves as a power function near $u_*$.

\begin{thm} \label{thm:weak-compact}
In the setting considered here, assume in addition that $\psi$ is bounded and that $\psi(u) \sim a (u_*-u)^b$ as $u \nearrow u_*$ for some $a > 0$ and $b > -1$.  Then $d (u_* - U_d)$ converges weakly to the Gamma distribution with shape parameter $b+1$ and rate $1/u_*$.
\end{thm}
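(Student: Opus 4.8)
The plan is to work directly with the density of $d(u_* - U_d)$ and show pointwise convergence of densities to the stated Gamma density, which by Scheffé's lemma (or a standard dominated-convergence argument) gives the claimed weak convergence. Assume $u_* = 1$ as in the previous proof. Let $V_d = d(1 - U_d)$, supported on $[0, d]$. A change of variables $u = 1 - v/d$ in the density $c_d u^d \psi(u)$ of $U_d$ gives that $V_d$ has density
\beq
f_{V_d}(v) = \frac{c_d}{d}\, \Bigl(1 - \frac{v}{d}\Bigr)^d \psi\Bigl(1 - \frac{v}{d}\Bigr), \qquad 0 \le v \le d.
\eeq
As $d \to \infty$, $(1 - v/d)^d \to e^{-v}$ and, using the hypothesis $\psi(u) \sim a(1-u)^b$ near $u = 1$, we have $\psi(1 - v/d) \sim a (v/d)^b$. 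So up to the normalization, $f_{V_d}(v) \approx \frac{c_d a}{d^{b+1}} v^b e^{-v}$, which is proportional to the Gamma$(b+1, 1)$ density — exactly the asserted limit when $u_* = 1$ (rate $1/u_* = 1$).

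The steps, in order: (1) derive the exact density of $V_d$ via the change of variables above; (2) pin down the normalizing behavior of $c_d$ — specifically show $c_d \sim d^{b+1}/(a\,\Gamma(b+1))$ by applying the same change of variables to $1/c_d = \int_0^1 u^d \psi(u)\,{\rm d}u$ and using Watson's lemma (or Laplace's method) together with $\psi(u) \sim a(1-u)^b$; (3) combine (1) and (2) to get pointwise convergence $f_{V_d}(v) \to \frac{1}{\Gamma(b+1)} v^b e^{-v}$ for each fixed $v > 0$; (4) upgrade pointwise convergence of densities to weak convergence, either by Scheffé's lemma or by producing an integrable dominating function. For (4) one needs a uniform bound on $(1 - v/d)^d \psi(1 - v/d)$: the factor $(1-v/d)^d \le e^{-v}$ handles the tail, and boundedness of $\psi$ (an explicit hypothesis) controls the part of the integrand where $v/d$ is bounded away from $0$; near $v = 0$ one uses $\psi(1-v/d) = O((v/d)^b)$, which after multiplying by $c_d/d \sim d^b/(a\Gamma(b+1))$ is $O(v^b)$, integrable near $0$ since $b > -1$.

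The main obstacle is step (2): getting the precise asymptotic constant for $c_d$, and more generally making the Laplace/Watson's-lemma estimates rigorous when $\psi$ is only assumed to satisfy an asymptotic equivalence $\psi(u) \sim a(1-u)^b$ near $1$ (plus boundedness), rather than a clean closed form. One must split $\int_0^1 u^d \psi(u)\,{\rm d}u$ into an integral over $[1-\delta, 1]$, where the $\sim$ is in force and yields the leading term $a\,\Gamma(b+1)/d^{b+1}$ up to a $(1+o(1))$ factor, and a remainder over $[0, 1-\delta]$, which is exponentially small ($\le (1-\delta)^d \sup \psi$) and hence negligible. The same $\delta$-splitting must be carried through consistently in step (4) to get the dominating bound. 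Once the constant $c_d d^{-(b+1)} \to 1/(a\Gamma(b+1))$ is established, the rest is routine; restoring general $u_*$ just rescales $v \mapsto v/u_*$, turning the rate from $1$ into $1/u_*$.
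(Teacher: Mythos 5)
Your proposal is correct and follows essentially the same route as the paper: the same change of variables $u = 1 - v/d$, the same Laplace-type analysis near the endpoint yielding $c_d \sim d^{b+1}/(a\,\Gamma(b+1))$, and the same use of $(1-v/d)^d \to e^{-v}$ together with $\psi(u) \sim a(1-u)^b$. The only differences are cosmetic: you prove pointwise convergence of the density of $d(u_*-U_d)$ and invoke Scheff\'e, while the paper passes directly to the limiting distribution function, and you bound the contribution of $[0,1-\delta]$ by $(1-\delta)^d \sup\psi$ where the paper instead cites its convergence-in-probability theorem.
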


\begin{proof}
Assume without loss of generality that $u_* = 1$.
We first control the behavior of $c_d$ as $d \to \infty$.
Fix $\eps \in (0,1)$.
On the one hand, by the assumptions on $\psi$ and Dominated 
Convergence, we have
\beq
\int_{1-\eps}^1 u^d \psi(u) {\rm d}u 
\sim \int_{1-\eps}^1 u^d a (1-u)^b {\rm d}u
\sim a B(d+1, b+1), \quad d \to \infty,
\eeq
where $B$ is the Beta function.
On the other hand, by \thmref{prob-compact}, 
\beq
c_d \int_{1-\eps}^1 u^d \psi(u) {\rm d}u \sim 1, \quad d \to \infty.
\eeq
Together, this proves that
\beq
1/c_d \sim a B(d+1, b+1) \sim a \Gamma(b+1) d^{-(b+1)}, \quad d \to \infty,
\eeq
where $\Gamma$ is the Gamma function.

We now consider the case where $\eps = \eps_d \to 0$ as $d \to \infty$.  More precisely, we fix $t > 0$ and set $\eps_d = t/d$.  By Dominated Convergence again, applied twice, and a change of variables, as $d \to \infty$, we have
\begin{align}
\P(U_d > 1-t/d) 
&= c_d \int_{1-t/d}^1 u^d \psi(u) {\rm d}u \\
&\sim c_d \int_{1-t/d}^1 u^d a (1-u)^b {\rm d}u \\
&= c_d a d^{-(b+1)} \int_0^t (1 - v/d)^d v^b {\rm d}v \\
&\sim \frac1{\Gamma(b+1)} \int_0^t e^{-v} v^b {\rm d}v.
\end{align}
Recognizing the distribution function of the Gamma distribution with shape parameter $b+1$ and rate 1, the proof is complete. 
\end{proof}

\section{The case of non-compact support}
\label{sec:non-compact}

We now assume that $\psi$ has non-compact support, which is equivalent to $u_* = \infty$ in \eqref{support}.  
We note that here the emblematic example is that of the standard normal distribution, which is known to concentrate near the sphere of radius $\sqrt{d}$, meaning $U_d/\sqrt{d} \to 1$.  In fact, $U_d^2$ has the chi-squared distribution with $d$ degrees of freedom, and in particular, $\sqrt{2} (U_d - \sqrt{d})$ is asymptotically standard normal in the limit $d \to \infty$.  Our results below extend this phenomena to other radial distributions.  

While we were able to handle the case of compact support, which we treated in \secref{compact}, with very natural assumptions, the case of non-compact support appears more challenging and our working assumptions are more complicated.  This is despite the fact that we favored simplicity over generality.  Nevertheless, our working assumptions include interesting (and natural) examples.

\subsection{Convergence in probability}
We start by establishing a convergence in probability.

We start by making the following assumptions.
We assume there is $u_\ddag$ such that, for $u \ge u_\ddag$, $\Lambda(u) := - \log \psi(u)$ is differentiable and $L(u) := u \Lambda'(u)$ is increasing.  In addition, we assume that $M(u) := L(u)/\log(u) \to \infty$ as $u \to \infty$ and 
\beq\label{M}
\limsup_{u \to \infty} \frac{M((1-\eps)u)}{M(u)} \le 1, \quad
\liminf_{u \to \infty} \frac{M((1+\eps)u)}{M(u)} \ge 1, \quad
\forall \eps \in (0,1).
\eeq

\begin{thm} \label{thm:prob}
In the setting considered here, $U_d/u_d \to 1$ in probability as $d \to \infty$, where $u_d := L^{-1}(d)$.
\end{thm}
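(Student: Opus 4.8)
The plan is to run a Laplace-type, mode-concentration argument on the density of $U_d$, in the same spirit as the proof of \thmref{prob-compact}. Write the density of $U_d$ as $c_d u^d \psi(u) = c_d e^{h_d(u)}$ with $h_d(u) := d\log u - \Lambda(u)$, so that $h_d'(u) = (d - L(u))/u$ for $u \ge u_\ddag$. Since $L$ is increasing there with $L(u_d) = d$, the function $h_d$ is increasing on $[u_\ddag, u_d]$ and decreasing on $[u_d, \infty)$, so the density of $U_d$ is unimodal past $u_\ddag$ with mode $u_d = L^{-1}(d)$. I also record at the outset that $u_d \to \infty$: indeed $M(u) = L(u)/\log u \to \infty$ forces $L(u) \to \infty$, hence $L^{-1}(d) \to \infty$.

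Next I would bound the tail probabilities $\P(U_d \le (1-\eps)u_d)$ and $\P(U_d \ge (1+\eps)u_d)$ for fixed $\eps \in (0,1)$ and show each tends to $0$. For the left tail, once $d$ is large enough that $(1-\eps)u_d > u_\ddag$, split the integral defining $\P(U_d \le (1-\eps)u_d)$ over $[0,u_\ddag]$ and $[u_\ddag,(1-\eps)u_d]$: on the first interval $u^d\psi(u) \le u_\ddag^{d-1}u\psi(u)$, so that piece is $O(u_\ddag^{d-1})$, and using $L(s) \le d$ on $[u_\ddag,u_d]$ to bound $\Lambda$ one checks it is negligible relative to the near-mode mass; on the second interval $h_d$ is increasing, so that piece is at most $u_d\, e^{h_d((1-\eps)u_d)}$. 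Lower-bounding $1/c_d = \int_0^\infty e^{h_d}$ by the integral over $[(1-\eps/2)u_d, u_d]$, which is at least $(\eps/2)u_d\, e^{h_d((1-\eps/2)u_d)}$, reduces the whole left-tail estimate to showing that
\beq
h_d((1-\tfrac\eps2)u_d) - h_d((1-\eps)u_d) \to \infty .
\eeq
The right tail is symmetric, now using that $h_d$ is decreasing past $u_d$; there one must also check that $\int_{(1+\eps)u_d}^\infty e^{h_d}$ is finite and small, which follows from $h_d'(u) \le -(L((1+\eps)u_d)-d)/u$ for $u \ge (1+\eps)u_d$, giving polynomial decay of $e^{h_d}$ with exponent $L((1+\eps)u_d)-d$, once that exponent is known to diverge.

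The crux is thus to prove, for every $\eps \in (0,1)$, the separations $L(u_d) - L((1-\eps)u_d) \to \infty$ and $L((1+\eps)u_d) - L(u_d) \to \infty$ (both quantities are nonnegative because $L$ is increasing). Using $L(s) \le L((1-\tfrac\eps2)u_d)$ on $[(1-\eps)u_d,(1-\tfrac\eps2)u_d]$ gives $h_d((1-\tfrac\eps2)u_d) - h_d((1-\eps)u_d) \ge \big(d - L((1-\tfrac\eps2)u_d)\big)\log\tfrac{1-\eps/2}{1-\eps}$, so the first separation yields the left tail, and the right tail is analogous. To obtain the separations I would substitute $L(u) = M(u)\log u$ and write
\beq
L(u_d) - L((1-\eps)u_d) = \big(M(u_d) - M((1-\eps)u_d)\big)\log u_d + M((1-\eps)u_d)\log\tfrac1{1-\eps},
\eeq
where the second term diverges because $M \to \infty$, and the first term is controlled using \eqref{M} (with the analogous use of the $\liminf$ condition for the right tail). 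Balancing these two terms -- that is, unwinding \eqref{M} so that the mode $u_d$ genuinely separates, in $L$-value, from the thresholds $(1\pm\eps)u_d$ -- is the step I expect to be the main obstacle. Once both separations are established, the tail-mass ratios are at most $\tfrac2\eps \exp\!\big(-c(\eps)(d-L((1-\tfrac\eps2)u_d))\big)$ and its right-tail counterpart, with $c(\eps) = \log\tfrac{1-\eps/2}{1-\eps} > 0$ fixed, so both tails vanish and $U_d/u_d \to 1$ in probability.
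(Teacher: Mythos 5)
Your plan reproduces the paper's proof essentially step for step: the unimodality of $u\mapsto u^d\psi(u)$ from the sign of $(d-L(u))/u$ with mode $u_d=L^{-1}(d)\to\infty$; upper-bounding each tail by the density value at the threshold (times a length on the left, times the convergent integral of $u^{d-b}$ with $b=L((1+\eps)u_d)>d+1$ on the right); lower-bounding the central mass by the integral over $[(1-\eps/2)u_d,u_d]$ (resp.\ $[u_d,(1+\eps/2)u_d]$); and reducing both tail ratios, via monotonicity of $L$, to the divergences $d-L((1-\eps/2)u_d)\to\infty$ and $L((1+\eps/2)u_d)-d\to\infty$. Your separate treatment of $[0,u_\ddag]$, where the monotonicity assumptions say nothing, is if anything more careful than the paper, which tacitly treats the density as unimodal on all of $[0,\infty)$.

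The one step you defer --- the ``balancing'' needed to get the $L$-separations --- is a genuine gap in the proposal as written, but it is exactly the point where the paper invokes \eqref{M}, and no balancing is required. Since $d=L(u_d)=M(u_d)\log u_d$, the first term of your decomposition can be rewritten as
\[
\bigl(M(u_d)-M((1-\eps)u_d)\bigr)\log u_d \;=\; d\Bigl[1-\frac{M((1-\eps)u_d)}{M(u_d)}\Bigr],
\]
and the paper simply observes that, by the $\limsup$ half of \eqref{M}, this quantity is nonnegative in the limit, so the divergence is carried entirely by your second term $M((1-\eps)u_d)\log\frac1{1-\eps}\to\infty$ (using $M\to\infty$ and $u_d\to\infty$); the $\liminf$ half of \eqref{M} plays the symmetric role for $L((1+\eps)u_d)-d$ on the right tail. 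So what you flag as the main obstacle is dispatched in the paper by a one-line appeal to \eqref{M} rather than by comparing the magnitudes of the two terms. Your instinct that this step deserves care is not misplaced --- \eqref{M} constrains only the ratios $M((1\mp\eps)u)/M(u)$, and the paper's proof rests on reading that as rendering the bracketed term (even after multiplication by $d$) harmless --- but once you adopt that reading, your argument closes with no other changes.
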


\begin{exa} \label{exa:running}
Consider the case where $\Lambda(u) = c \log(u+a)^\alpha (u+b)^\beta$, where $a > 0$, $b \ge 0$, $c > 0$, $\alpha \in \bbR$ and $\beta > 0$.  Surely, this defines a bonafide shape function $\psi$ in the sense of \eqref{psi}.  It can be shown that $\psi$ defined as such satisfies the conditions of \thmref{prob}, with 
\beq
u_d \sim c^{-1/\beta} \beta^{(\alpha-1)/\beta} (\log d)^{-\alpha/\beta} d^{1/\beta}, \quad d \to \infty.
\eeq
\end{exa}

\begin{proof}
The function $u \mapsto u^d \psi(u)$ is increasing on $[0,u_d)$ and decreasing on $(u_d, \infty)$.  Indeed, $\log(u^d \psi(u)) = d \log u - \Lambda(u)$ has derivative $\frac1u (d - L(u))$, which is positive for $u < u_d$, zero at $u = u_d$, and negative at $u > u_d$, by our assumptions and the definition of $u_d$.  Note that, necessarily, $u_d \to \infty$ as $d \to \infty$.

We have
\beq
\P(U_d \le v) = c_d \int_0^v u^d \psi(u) {\rm d}u.
\eeq
Fix $\eps \in (0,1)$.  

\noindent {\em Left tail.}
Using the fact that $u^d \psi(u) \le u_0^d \psi(u_0)$ for any $u \le u_0 \le u_d$, we have
\begin{align}
\frac1{c_d} \P(U_d \le (1-\eps)u_d) 
&= \int_0^{(1-\eps)u_d} u^d \psi(u) {\rm d}u \\
&\le ((1-\eps)u_d)^{d+1} \psi((1-\eps)u_d),
\end{align}
and we also have
\begin{align}
\frac1{c_d} \P(U_d \ge (1-\eps)u_d) 
&= \int_{(1-\eps)u_d}^\infty u^d \psi(u) {\rm d}u \\
&\ge \int_{(1-\eps/2)u_d}^{u_d} u^d \psi(u) {\rm d}u \\
\label{pleft_deno}
&\ge (\eps/2) u_d ((1-\eps/2)u_d)^d \psi((1-\eps/2)u_d).
\end{align}
Taking the ratio, we obtain
\begin{align}
\frac{\P(U_d \le (1-\eps)u_d)}{\P(U_d \ge (1-\eps)u_d)} 
&\le \frac{((1-\eps)u_d)^{d+1} \psi((1-\eps)u_d)}{(\eps/2) u_d ((1-\eps/2)u_d)^d \psi((1-\eps/2)u_d)} \label{lower} \\
&\le \frac{1-\eps}{\eps/2} \left(\frac{1-\eps}{1-\eps/2}\right)^d \frac{\psi((1-\eps)u_d)}{\psi((1-\eps/2)u_d)}. \label{left0}
\end{align}
Applying the logarithm, and ignoring the constant factor, we further get
\begin{align}
& d \log \left(\frac{1-\eps}{1-\eps/2}\right) - \Lambda((1-\eps)u_d) + \Lambda((1-\eps/2)u_d) \\
& = - d \int_{(1-\eps)u_d}^{(1-\eps/2)u_d} \frac1u {\rm d}u + \int_{(1-\eps)u_d}^{(1-\eps/2)u_d} \Lambda'(u) {\rm d}u \\
&= - \int_{(1-\eps)u_d}^{(1-\eps/2)u_d} (d - L(u)) \frac1u {\rm d}u \\
\label{log_left}
&\le - (d - L((1-\eps/2)u_d)) \log \left(\frac{1-\eps/2}{1-\eps}\right),
\end{align}
where we used the monotonicity of $L$ in the last line.
Therefore, to show that the fraction in \eqref{lower} converges to 0, it suffices to show that $d - L((1-\eps/2)u_d) \to \infty$.  The limit is as $d \to \infty$ while $\eps$ remains fixed.  Using the fact that $L(u_d) = M(u_d) \log u_d = d$, we have
\begin{align}
d - L((1-\eps/2)u_d)
&= d - M((1-\eps/2)u_d) \log((1-\eps/2)u_d) \\
&= d - M((1-\eps/2)u_d) \log(1-\eps/2) - M((1-\eps/2)u_d) \frac{d}{M(u_d)} \\
&= -M((1-\eps/2)u_d) \log(1-\eps/2) + d\left[1 - \frac{M((1-\eps/2)u_d)}{M(u_d)}\right].
\end{align}
In the last line, the first term tends to infinity because $M(u) \to \infty$ as $u\to \infty$ and $u_d \to \infty$, while the second terms is nonnegative in the limit because of \eqref{M}, so that the last expression tends to infinity.

We conclude that, for the left tail, 
\beq
\P(U_d \le (1-\eps)u_d) \to 0, \quad d \to \infty.
\eeq

\noindent {\em Right tail.}
Using the fact that $u^\ell \psi(u) \le u_0^\ell \psi(u_0)$ for any $u \ge u_0 \ge u_\ell$, where $u_\ell = L^{-1}(\ell)$ in congruence with our definition above, and assuming for now that $b := L((1+\eps)u_d) > d+1$, we have
\begin{align}
\frac1{c_d} \P(U_d \ge (1+\eps)u_d) 
&= \int_{(1+\eps)u_d}^\infty u^d \psi(u) {\rm d}u \\
&\le ((1+\eps)u_d)^b \psi((1+\eps)u_d) \int_{(1+\eps)u_d}^\infty u^{d - b} {\rm d}u \\
&= ((1+\eps)u_d)^b \psi((1+\eps)u_d) \frac{((1+\eps)u_d)^{d+1-b}}{b-d-1} \\
&=  \frac{((1+\eps)u_d)^{d+1}}{b-d-1} \psi((1+\eps)u_d),
\end{align}
and we also have
\begin{align}
\frac1{c_d} \P(U_d \le (1+\eps)u_d) 
&= \int_0^{(1+\eps)u_d} u^d \psi(u) {\rm d}u \\
&\ge \int_{u_d}^{(1+\eps/2)u_d} u^d \psi(u) {\rm d}u \\
&\ge (\eps/2) u_d ((1+\eps/2)u_d)^d \psi((1+\eps/2)u_d).
\end{align}
Taking the ratio, we obtain
\begin{align}
\frac{\P(U_d \ge (1+\eps)u_d)}{\P(U_d \le (1+\eps)u_d)} 
&\le \frac{\frac{((1+\eps)u_d)^{d+1}}{b-d-1} \psi((1+\eps)u_d)}{(\eps/2) u_d ((1+\eps/2)u_d)^d \psi((1+\eps/2)u_d)} \label{upper} \\
\label{right0}
&\le \frac{1+\eps}{\eps/2} \frac1{b-d-1} \left(\frac{1+\eps}{1+\eps/2}\right)^d \frac{\psi((1+\eps)u_d)}{\psi((1+\eps/2)u_d)}.
\end{align}

We pause to show that $b-d \to \infty$ eventually.  This is because, using the fact that $L(u_d) = M(u_d) \log u_d = d$, 
\begin{align}
b -d 
= L((1+\eps)u_d) -d 
&= M((1+\eps)u_d) \log ((1+\eps)u_d) -d \label{b} \\
&= M((1+\eps)u_d) \log (1+\eps) + \left[\frac{M((1+\eps)u_d)}{M(u_d)} -1\right] d.
\end{align}
In the last line, the first term tends to infinity because $M(u) \to \infty$ as $u\to \infty$ and $u_d \to \infty$, while the second terms is nonnegative in the limit because of \eqref{M}, so that the last expression tends to infinity.

Returning to \eqref{upper}, applying the logarithm, and ignoring the first two factors whose product is bounded by 1 eventually, we further get
\begin{align}
& d \log \left(\frac{1+\eps}{1+\eps/2}\right) - \Lambda((1+\eps)u_d) + \Lambda((1+\eps/2)u_d) \\
& = d \int_{(1+\eps/2)u_d}^{(1+\eps)u_d} \frac1u {\rm d}u - \int_{(1+\eps/2)u_d}^{(1+\eps)u_d} \Lambda'(u) {\rm d}u \\
& = - \int_{(1+\eps/2)u_d}^{(1+\eps)u_d} (L(u) -d) \frac1u {\rm d}u \\
\label{right_2nd}
&\le - (L((1+\eps/2)u_d) -d) \log \left(\frac{1+\eps}{1+\eps/2}\right),
\end{align}
where we used the monotonicity of $L$ in the last line.
Therefore, to show that the fraction in \eqref{upper} converges to 0, it suffices to show that $L((1+\eps/2)u_d) -d \to \infty$, and we already did this in \eqref{b}.

We conclude that, for the right tail, 
\beq
\P(U_d \ge (1+\eps)u_d) \to 0, \quad d \to \infty.
\eeq

\medskip
We can therefore conclude that $U_d/u_d \to 1$ in probability as $d \to \infty$.
\end{proof}

\subsection{Convergence in distribution}
\label{sec:weak}
We now turn to establishing a convergence in distribution.  Although we speculate that other cases may arise, we give (additional) sufficient conditions for a Gaussian limit.

We make the following additional assumptions.  We assume that $L$ is differentiable with $\nu_d := u_d L'(u_d) \to \infty$ and that there is $\omega_d \to \infty$ such that 
\beq\label{taylor}
\big|L(u_d) - L((1-\eps) u_d) - \eps u_d L'(u_d)\big| \le |\eps| \nu_d/\omega_d, \quad \text{whenever } \eps^2 \le \omega_d/\nu_d.  
\eeq
Note that \eqref{taylor} is a form of first-order Taylor expansion around $u_d$.

The following refines \thmref{prob}.
\begin{prp} \label{prp:prob}
In the setting considered here, 
\beq\label{eps_d}
\P(1 - \eps_d \le U/u_d \le 1 + \eps_d) \to 1, \quad \text{whenever } \eps_d \gg 1/\sqrt{\nu_d}.
\eeq
\end{prp}

\begin{proof}
By mononicity in $\eps_d > 0$, it is enough to show that when $ \eps_d^2 \le \omega_d/\nu_d$.  Then to prove \eqref{eps_d}, as before, it suffices to show that
\beq
\text{(left tail) } \frac{\P(U_d \le (1-\eps_d)u_d)}{\P(U_d \ge (1-\eps_d)u_d)} \to 0 \quad \text{and} \quad 
\text{(right tail) } \frac{\P(U_d \ge (1+\eps_d)u_d)}{\P(U_d \le (1+\eps_d)u_d)} \to 0.
\eeq

\noindent {\em Left tail.} As before, we can show that $b := L((1-\eps_d)u_d)$ satisfies $b -d \to -\infty$, so that we may assume that $b < d-1$.  Then using the fact that $u^\ell \psi(u) \le u_0^\ell \psi(u_0)$ for any $u \le u_0 \le u_\ell$, where $u_\ell = L^{-1}(\ell)$, we have 
\begin{align}
\frac1{c_d} \P(U_d \le (1-\eps_d)u_d) 
&= \int_0^{(1-\eps_d)u_d} u^d \psi(u) {\rm d}u \\
&\le ((1-\eps_d)u_d)^b \psi((1-\eps_d)) \int_0^{(1-\eps_d)u_d} u^{d-b}{\rm d} u\\
\label{pleft_num}
&= \frac{((1-\eps_d)u_d)^{d+1}}{d-b+1} \psi((1-\eps_d)u_d).
\end{align}
Taking the ratio of \eqref{pleft_num} to \eqref{pleft_deno} (but replacing $\eps$ by $\eps_d$), we obtain
\beq
\frac{\P(U_d \le (1-\eps_d)u_d)}{\P(U_d \ge (1-\eps_d)u_d)} \leq \frac1{d-b+1}\frac{1-\eps_d}{\eps_d/2} \left(\frac{1-\eps_d}{1-\eps_d/2}\right)^d \frac{\psi((1-\eps_d)u_d)}{\psi((1-\eps_d/2)u_d)}.
\eeq
As in \eqref{left0} and \eqref{log_left}, we apply a logarithm, and obtain the upper bound
\beq
\label{left}
\log \frac{1-\eps_d}{\frac{\eps_d}{2} (d-b+1)} - [d-L((1-\eps_d/2)u_d)] \log \frac{1-\eps_d/2}{1-\eps_d}.
\eeq
By \eqref{taylor}, which is applicable by our assumption $\eps_d^2 \le \omega_d/\nu_d$, we have
\beq
d-b = L(u_d) - L((1-\eps_d)u_d) = \eps_d \nu_d \pm |\eps_d| \nu_d/\omega_d \sim \eps_d \nu_d, \quad d \to \infty.
\eeq
Similarly,
\beq
d-L((1-\eps_d/2)u_d) \sim \tfrac12 \eps_d \nu_d, \quad d \to \infty.
\eeq
Also, note that $\eps_d^2 \nu_d \to \infty$ by assumption.
Hence, the first term in \eqref{left} is $\sim -\log(\eps_d^2 \nu_d)$ while the second term (including sign) is $\sim -\frac14 \eps_d^2 \nu_d$, so that the sum tends to $-\infty$.

\bigskip 
\noindent {\em Right tail.} 
The treatment of the right tail is analogous, starting with \eqref{right0} instead of \eqref{left0}.  Details are omitted.  
\end{proof}

In the following we examine the behavior of the normalizing constant $c_d$ as $d \to \infty$.
\begin{prp}\label{prp:constant}
In the setting considered here,
\beq
\label{constant}
\frac1{c_d} \sim \sqrt{\frac{2 \pi}{\nu_d}} u_d^{d+1} \psi(u_d), \quad d \to \infty.
\eeq
\end{prp}

\begin{proof}
Let $\eps_d$ be such that $1/\nu_d \ll \eps_d^2 \ll \omega_d/\nu_d$.  Applying \prpref{prob} and then performing a change of variables, we get
\begin{align} 
\frac1{c_d} 
= \int_0^\infty u^d \psi(u) {\rm d}u
&\sim \int_{(1-\eps_d) u_d}^{(1+\eps_d) u_d} u^d \psi(u) {\rm d}u \\
&\sim \int_{-\eps_d}^{\eps_d} [(1+t) u_d]^d \psi[(1+t) u_d] u_d {\rm d}t \\
&= u_d^{d+1} \psi(u_d) \int_{-\eps_d}^{\eps_d} (1+t)^d \frac{\psi[(1+t) u_d]}{\psi(u_d)}{\rm d}t. \label{eq1}
\end{align}
As before,
\begin{align} 
\log \Big\{(1+t)^d \frac{\psi[(1+t) u_d]}{\psi(u_d)}\Big\}
&= d \log(1+t) - \Lambda[(1+t)u_d] + \Lambda(u_d) \\
&= d \int_0^t \frac{s}{1+s} {\rm d}s - \int_0^t u_d \Lambda'[(1+s) u_d] {\rm d}s \\
&= \int_0^t \frac1{1+s} \Big\{d - L[(1+s) u_d]\Big\} {\rm d}s.
\end{align}
Noting that $|s| \le \eps_d$, and using \eqref{taylor}, we get
\beq
- s \nu_d - s \nu_d/\omega_d \le d - L[(1+s) u_d] 
= L(u_d) - L[(1+s) u_d]
\leq - s \nu_d + s \nu_d/\omega_d.
\eeq
Hence,
\begin{align}
- \nu_d (1 + 1/\omega_d) \int_0^t \frac{s}{1+s} {\rm d}s \le \int_0^t \frac1{1+s} \Big\{d - L[(1+s) u_d]\Big\} {\rm d}s &\le - \nu_d (1 - 1/\omega_d) \int_0^t \frac{s}{1+s} {\rm d}s,
\end{align}
with
\beq
\int_0^t \frac{s}{1+s} {\rm d}s
= \tfrac12 t^2 + O(t^3)
= \tfrac12 t^2 + O(\eps_d^3),
\eeq
since $|t| \le \eps_d$, so that
\beq
\int_0^t \frac1{1+s} \Big\{d - L[(1+s) u_d]\Big\} {\rm d}s
= - \tfrac12 t^2 \nu_d + O(\eps_d + 1/\omega_d) \eps_d^2 \nu_d,
\eeq
where the big-O is uniform in $t \in [-\eps_d, \eps_d]$.
We already took $\eps_d$ such that $(1/\omega_d) \eps_d^2 \nu_d \to 0$, and it is compatible to choose $\eps_d$ such that, in addition, $\eps_d^3 \nu_d \to 0$.  When we do so, the remainder term above is $o(1)$, and in particular, 
\beq
\int_0^t \frac1{1+s} \Big\{d - L[(1+s) u_d]\Big\} {\rm d}s = - \tfrac12 t^2 \nu_d + o(1),
\eeq
where the $o(1)$ term is uniform in $t \in [-\eps_d, \eps_d]$.
With such a choice of $\eps_d$, we continue our derivations above
\begin{align}
\int_{-\eps_d}^{\eps_d} (1+t)^d \frac{\psi[(1+t) u_d]}{\psi(u_d)}{\rm d}t
&= \int_{-\eps_d}^{\eps_d} \exp\Big\{- \tfrac12 t^2 \nu_d + o(1) \Big\} {\rm d}t \\
&\sim \frac1{\sqrt{\nu_d}} \int_{-\eps_d \sqrt{\nu_d}}^{\eps_d \sqrt{\nu_d}} \exp\Big\{- \tfrac12 s^2 \Big\} {\rm d}s 
\sim \frac1{\sqrt{\nu_d}} \sqrt{2 \pi}, \label{2pi}
\end{align}
since $\eps_d \sqrt{\nu_d} \to \infty$ by assumption.
\end{proof}

We are finally equipped to establish a convergence in distribution for $U_d$.

\begin{thm} \label{thm:weak}
In the setting considered here, $\sqrt{\nu_d} (U_d/u_d - 1)$ converges weakly to the standard normal distribution as $d \to \infty$.
\end{thm}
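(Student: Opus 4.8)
The plan is to compute the limiting distribution function of $\sqrt{\nu_d}\,(U_d/u_d - 1)$ directly, re-running the computation behind \prpref{prob} and \prpref{constant} with one endpoint of integration held fixed. Fix $t \in \bbR$. It suffices to show
\[
\P\big(\sqrt{\nu_d}\,(U_d/u_d - 1) \le t\big) = \P\big(U_d \le (1 + t/\sqrt{\nu_d})\,u_d\big) \longrightarrow \frac1{\sqrt{2\pi}} \int_{-\infty}^t e^{-r^2/2}\,{\rm d}r, \quad d \to \infty,
\]
since the right-hand side, as a function of $t$, is the continuous distribution function of the standard normal law.

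First I would fix a truncation scale $\eps_d$ satisfying $1/\sqrt{\nu_d} \ll \eps_d$, $\eps_d^2 \ll \omega_d/\nu_d$, and $\eps_d^3 \nu_d \to 0$; such a choice exists, exactly as in the proof of \prpref{constant}. Because $\eps_d \sqrt{\nu_d} \to \infty$, for all large $d$ we have $-\eps_d < t/\sqrt{\nu_d} < \eps_d$, so \prpref{prob} lets us localize:
\[
\P\big(U_d \le (1 + t/\sqrt{\nu_d})\,u_d\big) = c_d \int_{(1-\eps_d)u_d}^{(1+t/\sqrt{\nu_d})u_d} u^d \psi(u)\,{\rm d}u + o(1).
\]
Substituting $u = (1+s) u_d$ turns the integral into $u_d^{d+1} \psi(u_d) \int_{-\eps_d}^{t/\sqrt{\nu_d}} (1+s)^d \frac{\psi[(1+s)u_d]}{\psi(u_d)}\,{\rm d}s$, whose integrand is exactly the one analyzed in \prpref{constant}: using \eqref{taylor} (applicable since $\eps_d^2 \le \omega_d/\nu_d$) together with $\eps_d^3 \nu_d \to 0$, one obtains
\[
\log\Big\{(1+s)^d \frac{\psi[(1+s)u_d]}{\psi(u_d)}\Big\} = -\tfrac12 s^2 \nu_d + o(1), \quad \text{uniformly for } |s| \le \eps_d.
\]

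Next, substituting $s = r/\sqrt{\nu_d}$ and invoking the uniform expansion above,
\[
\int_{-\eps_d}^{t/\sqrt{\nu_d}} (1+s)^d \frac{\psi[(1+s)u_d]}{\psi(u_d)}\,{\rm d}s = \frac{1 + o(1)}{\sqrt{\nu_d}} \int_{-\eps_d \sqrt{\nu_d}}^{t} e^{-r^2/2}\,{\rm d}r \sim \frac1{\sqrt{\nu_d}} \int_{-\infty}^{t} e^{-r^2/2}\,{\rm d}r,
\]
again because $\eps_d \sqrt{\nu_d} \to \infty$. Combining this with \prpref{constant}, which gives $c_d\, u_d^{d+1} \psi(u_d) \sim \sqrt{\nu_d/(2\pi)}$, yields $\P\big(U_d \le (1 + t/\sqrt{\nu_d})\,u_d\big) \to \frac1{\sqrt{2\pi}}\int_{-\infty}^t e^{-r^2/2}\,{\rm d}r$ for every $t$, which is the claimed weak convergence.

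The main obstacle — essentially the only nontrivial point — is the bookkeeping: checking that the $o(1)$ remainder in the log-expansion is uniform over the \emph{asymmetric} truncated interval $[-\eps_d,\, t/\sqrt{\nu_d}]$, and that a single sequence $\eps_d$ meets all the competing constraints at once ($\eps_d \gg 1/\sqrt{\nu_d}$ so that \prpref{prob} applies and the rescaled left endpoint escapes to $-\infty$; $\eps_d^2 \ll \omega_d/\nu_d$ so that \eqref{taylor} applies; $\eps_d^3 \nu_d \to 0$ so the remainder is genuinely negligible). All of these have already been arranged in the proof of \prpref{constant}, so the present argument amounts to that computation with the upper limit of integration set to $t/\sqrt{\nu_d}$ rather than $\eps_d$.
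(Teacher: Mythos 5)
Your proposal is correct and follows essentially the same route as the paper's own proof: localize to $[(1-\eps_d)u_d,\,(1+t/\sqrt{\nu_d})u_d]$ via \prpref{prob}, reuse the uniform expansion $-\tfrac12 s^2\nu_d + o(1)$ from the proof of \prpref{constant}, rescale by $\sqrt{\nu_d}$, and conclude with the asymptotics for $c_d u_d^{d+1}\psi(u_d)$ from \prpref{constant}. The only difference is that you spell out the bookkeeping on $\eps_d$ that the paper leaves implicit in ``let $\eps_d$ be as before.''
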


\setcounter{exa}{0}
\begin{exa}[Continued]
It can be checked that the same example of shape function $\psi$ satisfies the conditions assumed here, with $u L'(u) \sim c \beta^2 (\log u)^\alpha u^\beta$ 
as $u \to \infty$, so that
\beq
\nu_d = u_d L'(u_d) \sim \beta d, \quad d \to \infty.
\eeq
\end{exa}

\begin{proof}
Fix $r \in \bbR$ and let $\eps_d$ be as before.  
As in \eqref{eq1}, we get
\begin{align}
\P(\sqrt{\nu_d} (U_d/u_d - 1) \le r)
&= \P(U_d \le (1+r/\sqrt{\nu_d}) u_d) \\
&\sim \P((1-\eps_d) u_d \le U_d \le (1+r/\sqrt{\nu_d}) u_d) \\
&= c_d u_d^{d+1} \psi(u_d) \int_{-\eps_d}^{r/\sqrt{\nu_d}} (1+t)^d \frac{\psi[(1+t) u_d]}{\psi(u_d)}{\rm d}t.
\end{align}
Again, as before,
\begin{align}
\int_{-\eps_d}^{r/\sqrt{\nu_d}} (1+t)^d \frac{\psi[(1+t) u_d]}{\psi(u_d)}{\rm d}t 
&\sim \int_{-\eps_d}^{r/\sqrt{\nu_d}} \exp\Big\{- \tfrac12 t^2 \nu_d \Big\} {\rm d}t \\
&= \frac1{\sqrt{\nu_d}} \int_{-\eps_d \sqrt{\nu_d}}^r \exp\Big\{- \tfrac12 s^2 \Big\} {\rm d}s 
\sim \frac1{\sqrt{\nu_d}} \sqrt{2 \pi} \Phi(r),
\end{align}
where $\Phi$ is the standard normal distribution function.
We then combine this with \prpref{constant}.
\end{proof}

\section{Discussion}
\label{sec:discussion}

While there is relatively little related work, a detailed comparison with \citep{sherlock2012class} is in order.\footnote{Note that we only became aware of that work after we posted our paper on {\tt arxiv.org}, which explains some of the nontrivial overlap.}  \citeauthor{sherlock2012class} focus on the non-compact case --- corresponding to \secref{non-compact} here.  They derive the same result as our \thmref{prob} under different conditions.  They require that $\eta(u) := \Lambda(\exp(u))$ is twice differentiable with $\eta''(u) \to \infty$ as $u \to \infty$, while our condition is a bit weaker than requiring that $\eta$ is once differentiable with $\eta'(u)/u \to \infty$ and increasing.  
Note that their condition is equivalent to requiring that $L$ is differentiable with $u L'(u) \to \infty$, a condition that arises in \secref{weak}.  
\citeauthor{sherlock2012class} do not establish weak convergence, however, but they obtain other results.  In particular, they establish concentration for a marginal of $X_d$ and also for the maximum of the marginals.  In addition, they extend their results to the case of elliptical distributions under conditions on the eigenvalues of the scaling matrix.

\bigskip
{\em What are possible consequences for statistical modeling?}  
Because of the weak convergence of the sort established here, the behavior of $U_d$ is asymptotically characterized solely by a few parameters of the underlying distribution.  For example, if the conditions of \thmref{weak-compact} are fulfilled, then the distribution of $U_d$ in the large-dimension limit ($d \to \infty$) only depends on $u_*$ (irrelevant in practice because scale is typically estimated) and the behavior of $\psi$ near $u_*$.  In particular, whether $\psi(u) = \bbI\{u \le 1\}$ or $\psi(u) = (2 - u) \bbI\{u \le 1\}$, in both cases, $d (1 - U_d)$ converges weakly to the exponential distribution with rate 1.  This means that, in order to even distinguish two such distributions with nontrivial accuracy, we require a sample of size that increases with $d$.  (We did not attempt to quantify this further, although this is possible by framing the problem as a hypothesis testing problem.)  
A similar phenomenon arises with certain distributions with non-compact support, based on our \thmref{weak}.  Thus, if the sample size is small relative to the dimension, very different shape functions (meaning, different $\psi$'s) could yield indistinguishable models.

The flip side of this is a form of universality of the Gaussian distribution, in particular, in context such as Linear Discriminant Analysis (classification) or Gaussian Mixture Modeling (clustering).  Surely, both LDA and GMM have computational advantages over other methods (the latter using the EM algorithm, for example).  Beyond this important computational aspect, our results indicate that when the sample is small relative to the dimension, fitting a Gaussian model may be, in fact, indistinguishable from fitting another model base on a shape function having similar characteristics as the standard normal distribution that dictate the asymptotic behavior of $U_d$.


{\small
\bibliographystyle{chicago}
\bibliography{ref}
}
\end{document}